\newtheorem{theorem}{Theorem}[section]
\newtheorem{corollary}[theorem]{Corollary}
\theoremstyle{definition}
\newtheorem{example}[theorem]{Example}
\newtheorem{question}[theorem]{Question}
\numberwithin{equation}{theorem}
\def\ge{\geqslant}
\def\le{\leqslant}
\def\phi{\varphi}
\def\tilde{\widetilde}
\def\to{\longrightarrow}
\def\mapsto{\longmapsto}
\def\onto{\relbar\joinrel\twoheadrightarrow}
\def\ker{\operatorname{ker}}
\def\rank{\operatorname{rank}}
\def\Hom{\operatorname{Hom}}
\def\Ext{\operatorname{Ext}}
\def\Spec{\operatorname{Spec}}
\def\edim{\operatorname{edim}}
\def\bsi{{\boldsymbol{i}}}
\def\bsx{{\boldsymbol{x}}}
\def\bsy{{\boldsymbol{y}}}
\def\bsz{{\boldsymbol{z}}}
\def\bzero{{\boldsymbol{0}}}
\def\fraka{\mathfrak{a}}
\def\frakm{\mathfrak{m}}
\def\frakp{\mathfrak{p}}
\def\CC{\mathbb{C}}
\def\NN{\mathbb{N}}
\def\PP{\mathbb{P}}
\def\RR{\mathbb{R}}
\def\ZZ{\mathbb{Z}}
\newcolumntype{C}[1]{>{\centering\arraybackslash}p{#1}}
\begin{document}
\title{Koszul and local cohomology, and a question of Dutta}

\author{Linquan Ma}
\address{Department of Mathematics, Purdue University, 150 N University St., West Lafayette, IN~47907, USA}
\email{ma326@purdue.edu}

\author{Anurag K. Singh}
\address{Department of Mathematics, University of Utah, 155 South 1400 East, Salt Lake City, UT~84112, USA}
\email{singh@math.utah.edu}

\author{Uli Walther}
\address{Department of Mathematics, Purdue University, 150 N University St., West Lafayette, IN~47907, USA}
\email{walther@math.purdue.edu}

\thanks{L.M. was supported by NSF Grant DMS~1901672, NSF FRG Grant DMS~1952366, and by a fellowship from the Sloan Foundation, A.K.S.~by NSF grant DMS~1801285, and U.W.~by the Simons Foundation Collaboration Grant for Mathematicians~580839. A.K.S. thanks Purdue University and his coauthors for their hospitality. The authors are grateful to Srikanth B.~Iyengar and to the referee for several helpful~comments.}

\begin{abstract}
For $(A,\frakm)$ a local ring, we study the natural map from the Koszul cohomology module $H^{\dim A}(\frakm;\,A)$ to the local cohomology module $H^{\dim A}_\frakm(A)$. We prove that the injectivity of this map characterizes the Cohen-Macaulay property of the ring $A$. We also answer a question of Dutta by constructing normal rings $A$ for which this map is zero.
\end{abstract}
\maketitle

%%%%%%%%%%%%%%%%%%%%%%%%%%%%%%%%%%%%%%%%%%%%%%%%%%%%%%%%%%%%%%%
\section{Introduction}
%%%%%%%%%%%%%%%%%%%%%%%%%%%%%%%%%%%%%%%%%%%%%%%%%%%%%%%%%%%%%%%

For a commutative Noetherian local ring $(A,\frakm)$, we study the natural map from the Koszul cohomology module $H^{\dim A}(\frakm;\,A)$ to the local cohomology module $H^{\dim A}_\frakm(A)$, and use this to answer a question raised by Dutta~\cite{Dutta:LMS}, Question~\ref{question:dutta} below. The motivation for Dutta's question stems from Hochster's \emph{monomial conjecture}~\cite[page~33]{Hochster:Nagoya} that occupies a central place in local algebra; this is the conjecture that if $\bsz\colonequals z_1,\dots,z_n$ form a system of parameters for a local ring~$A$, then for each $t\in\NN$ one has
\[
(z_1\cdots z_n)^t\ \notin\ (z_1^{t+1},\dots,z_n^{t+1})A.
\]
An equivalent formulation of the conjecture is that the natural map
\[
\phi^n_\bsz \colon H^n(\bsz;\,A)\to H^n_{\bsz A}(A),
\]
as discussed in \S\ref{section:koszul}, is nonzero. The monomial conjecture was proved for rings containing a field by Hochster, in the same paper where it was first formulated. The case of rings of dimension at most two is straightforward; for decades, the conjecture remained unresolved for mixed characteristic rings of dimension greater than or equal to three, as did its equivalent formulations, \emph{the direct summand conjecture}, \emph{the canonical element conjecture}, and the \emph{improved new intersection conjecture}. In \cite{Heitmann} Heitmann proved these equivalent conjectures for mixed characteristic rings of dimension three; more recently, Andr\'e~\cite{Andre} settled the mixed characteristic case in full generality, with Bhatt~\cite{Bhatt} establishing a derived variant. Related homological conjectures including \emph{Auslander's zerodivisor conjecture} and \emph{Bass's conjecture} had been settled earlier by Roberts \cite{Roberts:intersection}.

For the setup of Dutta's question, let $A$ be a complete local ring. Using the Cohen structure theorem, $A$ can be written as the homomorphic image of a complete regular local ring; this surjection may be factored so as to obtain a complete Gorenstein local ring $R$, such that $A$ is the homomorphic image of $R$, and $\dim R=\dim A$. Dutta~\cite[page~50]{Dutta:LMS} asked:

\begin{question}
\label{question:dutta}
Let $A$ be a complete normal local ring. Let $(R,\frakm)$ be a Gorenstein ring with a surjective homomorphism $R\onto A$, such that $n\colonequals\dim R=\dim A$. Does the natural map
\begin{equation}
\label{equation:question:dutta}
\Ext^n_R(R/\frakm,\,A)\to H^n_\frakm(A)
\end{equation}
have a nonzero image?
\end{question}

We prove that the answer to the above is negative in the following strong sense: we construct a complete normal local ring $A$ such that for \emph{each} Gorenstein ring $R$ with $R\onto A$ and $\dim R=\dim A$, the map~\eqref{equation:question:dutta} is zero. Our approach is via studying a related question on maps from Koszul cohomology to local cohomology: Let $\bsz\colonequals z_1,\dots,z_t$ be elements of $R$ that generate an $\frakm$-primary ideal. Let $P_\bullet$ be a projective resolution of $R/\frakm$ as an~$R$-module. The canonical surjection $R/\bsz R\onto R/\frakm$ lifts to a map of complexes
\[
K_\bullet(\bsz;\,R)\to P_\bullet,
\]
where $K_\bullet(\bsz;\,R)$ denotes the homological Koszul complex. Applying $\Hom_R(-,\,A)$ to the above and taking cohomology, one obtains the map
\begin{equation}
\label{equation:ext:koszul}
\Ext^n_R(R/\frakm,\,A)\to H^n(\bsz;\,A),
\end{equation}
where $H^n(\bsz;\,A)$ denotes Koszul cohomology. The map~\eqref{equation:question:dutta} factors as a composition of~\eqref{equation:ext:koszul} and the natural map from Koszul cohomology to local cohomology
\begin{equation}
\label{equation:koszul:map}
H^n(\bsz;\,A)\to H^n_\frakm(A);
\end{equation}
the map above is described explicitly in~\S\ref{section:koszul}. Note that in~\eqref{equation:koszul:map}, the ring $R$ no longer plays a role: the elements $\bsz$ may be replaced by their images in $A$; likewise, the maximal ideal of $R$ may be replaced by that of $A$. In Theorem~\ref{theorem:segre} we construct normal graded rings $A$ for which the map~\eqref{equation:koszul:map} is zero; localizing at the homogeneous maximal ideal and taking the completion, one obtains examples where the answer to Question~\ref{question:dutta} is negative.

Quite generally, for $(A,\frakm)$ a local ring and $n\colonequals\dim A$, we prove that the injectivity of the natural map~$H^n(\fraka;\,A)\to H^n_\fraka(A)$ for some (or each) $\frakm$-primary $\fraka$ is equivalent to the ring $A$ being Cohen-Macaulay, Theorem~\ref{theorem:cohen:macaulay}; here, and in the sequel, we use $K^\bullet(\fraka;\,A)$ to denote the cohomological Koszul complex on a \emph{minimal} set of generators for an ideal~$\fraka$, and $H^\bullet(\fraka;\,A)$ for its cohomology. In \S\ref{section:koszul} we record definitions and preliminary material. While~\S\ref{section:injectivity} is largely devoted to the injectivity of the map $H^n(\frakm;\,A)\to H^n_\frakm(A)$,~\S\ref{section:nonvanishing} investigates the nonvanishing and the kernel. Theorem~\ref{theorem:buchsbaum} records a case where we obtain precise information on the kernel of the map $H^n(\frakm;\,A)\to H^n_\frakm(A)$, that we then illustrate with several examples, including some involving Stanley-Reisner rings, \S\ref{section:stanley:reisner}.

All rings under consideration in this paper are Noetherian; by a \emph{local} ring $(A,\frakm)$, we mean a Noetherian ring $A$ with a unique maximal ideal $\frakm$.

%%%%%%%%%%%%%%%%%%%%%%%%%%%%%%%%%%%%%%%%%%%%%%%%%%%%%%%%%%%%%%%
\section{Graded Koszul and local cohomology, and limit closure}
\label{section:koszul}
%%%%%%%%%%%%%%%%%%%%%%%%%%%%%%%%%%%%%%%%%%%%%%%%%%%%%%%%%%%%%%%

We record some preliminaries on Koszul and local cohomology; the discussion below is in the graded context, in the form used in the proof of Theorems~\ref{theorem:segre}. Ignoring the grading and degree shifts, one has similar statements outside of the graded setting.

Let $A$ be an $\NN$-graded ring, and $z$ a homogeneous ring element. Then one has a degree-preserving map from the Koszul complex $K^\bullet(z;\,A)$ to the \v Cech complex $C^\bullet(z;\,A)$ as below:
\[
\CD
0 @>>> A @>z>> A(\deg z) @>>> 0\phantom{.}\\
@. @V1VV @V\frac{1}{z}VV\\
0 @>>> A @>>> A_z @>>> 0.
\endCD
\]
For a sequence of homogeneous elements $\bsz\colonequals z_1,\dots,z_t$, one similarly has
\[
\CD
K^\bullet(\bsz;\,A)\colonequals\bigotimes_i K^\bullet(z_i;\,A) @>>> \bigotimes_i C^\bullet(z_i;\,A)\equalscolon C^\bullet(\bsz;\,A).
\endCD
\]
For each $m\ge 0$, the induced map from Koszul cohomology to local cohomology modules
\[
\phi^m_\bsz\colon H^m(\bsz;\,A)\to H^m_{\bsz A}(A)
\]
is degree-preserving, and what we refer to as the \emph{natural} map. For homogeneous elements~$\bsz$ and~$w$ in $A$, one has a commutative diagram with degree-preserving maps and exacts rows:
\begin{equation}
\label{equation:koszul:local}
\minCDarrowwidth20pt
\CD
@>>> H^{m-1}(\bsz;\,A) @>{\pm w}>>H^{m-1}(\bsz;\,A)(\deg w) @>>> H^m(\bsz,w;\,A) @>>> H^m(\bsz;\,A) @>>>\\
@. @V{\phi^{m-1}_{\bsz}}VV @V{\pm\frac{1}{w}\phi^{m-1}_{\bsz}}VV @V{\phi^m_{\bsz,w}}VV @V{\phi^m_{\bsz}}VV\\
@>>> H^{m-1}_{(\bsz)}(A) @>>>H^{m-1}_{(\bsz)}(A_w) @>>> H^m_{(\bsz, w)}(A) @>>> H^m_{(\bsz)}(A) @>>>
\endCD
\end{equation}

Set $n\colonequals\dim A$, and fix a homogeneous system of parameters $\bsz\colonequals z_1,\dots,z_n$ for $A$. The map $\phi^n_\bsz \colon H^n(\bsz;\,A)\to H^n_{\bsz A}(A)$ then takes form
\begin{equation}
\label{equation:top}
\frac{A}{\bsz A}(\textstyle\sum\deg z_i)\to \dfrac{A_{z_1\cdots z_n}}{\sum A_{z_1\cdots \hat{z_i}\cdots z_n}},
\qquad
1\mapsto\left[\dfrac{1}{z_1\cdots z_n}\right].
\end{equation}
Following \cite[\S2.5]{Huneke:Smith}, the \emph{limit closure} of the parameter ideal $\bsz A$ is the ideal
\[
(\bsz A)^{\lim}\colonequals\{x\in A\mid xz_1^j\cdots z_n^j\in (z_1^{j+1},\,\dots,\,z_n^{j+1})A \text{\ \ for }j\gg0\}.
\]
Using~\eqref{equation:top}, it is readily seen that $(\bsz A)^{\lim}/\bsz A$ is the kernel of $\phi^n_\bsz \colon H^n(\bsz;\,A)\to H^n_{\bsz A}(A)$. It follows that $(\bsz A)^{\lim}$ is unchanged if one takes a different choice of \emph{minimal} generators for the ideal $\bsz A$. When $\bsz$ is a regular sequence, it is easily checked that $(\bsz A)^{\lim}=\bsz A$.

\begin{example}
\label{example:real}
Consider $A\colonequals\RR[x,y,ix,iy]$, which is a subring of the polynomial ring $\CC[x,y]$. Then $A$ is a standard graded ring, with homogeneous system of parameters $x,y$. Since
\[
ix(xy)=iy(x^2)\ \in\ (x^2,y^2),
\]
one has $ix\in (x,y)^{\lim}$. Similarly, $iy\in (x,y)^{\lim}$, so $(x,y)^{\lim}$ equals the homogeneous maximal ideal $\frakm$ of $A$. The map $H^2(\frakm;\,A)\to H^2_\frakm(A)$ is zero by an argument similar to the one used in the proof of Theorem~\ref{theorem:segre}; alternatively, see Example~\ref{example:two:lines}.
\end{example}

The ring in the example above is not normal; this leads to:

\begin{question}
Does there exist a non-regular complete normal local ring $(A,\frakm)$, with a system of parameters $\bsz$, such that $(\bsz A)^{\lim}=\frakm$?
\end{question}

For a ring $A$ that meets the conditions above, the answer to Question~\ref{question:dutta} is negative: let~$\bsz\colonequals z_1,\dots,z_n$ be a system of parameters with $(\bsz A)^{\lim}=\frakm$, and take $R$ as in Question~\ref{question:dutta}. The map $\Ext^n_R(R/\frakm_R,\,A)\to H^n_\frakm(A)$ factors through $H^n(\bsz;\,A)=A/\bsz A$. As~$\Ext^n_R(R/\frakm_R,\,A)$ is annihilated by $\frakm$, so is its image in $A/\bsz A$. Hence this image is a submodule of
\[
0:_{A/\bsz A}\frakm\ \subseteq\ \frakm/\bsz A,
\]
where the containment above holds since $A$ is not regular. Since $H^n(\bsz;\,A)\to H^n_\frakm(A)$ has kernel $(\bsz A)^{\lim}/\bsz A = \frakm/\bsz A$, it follows that $\Ext^n_R(R/\frakm_R,\,A)\to H^n_\frakm(A)$ must be zero.

%%%%%%%%%%%%%%%%%%%%%%%%%%%%%%%%%%%%%%%%%%%%%%%%%%%%%%%%%%%%%%%
\section{Injectivity of the map from Koszul to local cohomology}
\label{section:injectivity}
%%%%%%%%%%%%%%%%%%%%%%%%%%%%%%%%%%%%%%%%%%%%%%%%%%%%%%%%%%%%%%%

We begin with a characterization of the Cohen-Macaulay property:

\begin{theorem}
\label{theorem:cohen:macaulay}
Let $(A,\frakm)$ be a local ring; set $n\colonequals\dim A$. Then the following are equivalent:
\begin{enumerate}[\,\rm(1)]
\item The ring $A$ is Cohen-Macaulay.
\item The natural map $H^n(\fraka;\,A)\to H^n_\fraka(A)$ is injective for each $\frakm$-primary ideal $\fraka$ of $A$.
\item The natural map $H^n(\fraka;\,A)\to H^n_\fraka(A)$ is injective for some $\frakm$-primary ideal $\fraka$ of $A$.
\end{enumerate}
\end{theorem}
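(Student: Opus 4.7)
(2)$\Rightarrow$(3) is immediate, so the plan is to prove (1)$\Rightarrow$(2) plus the contrapositive of (3)$\Rightarrow$(1). Both rest on the same setup: for any $\frakm$-primary~$\fraka$, a prime-avoidance argument---take generic $k$-linear combinations of a minimal generating set---yields a minimal generating set $y_1,\dots,y_n,w_1,\dots,w_s$ of $\fraka$ in which $\bsy\colonequals y_1,\dots,y_n$ is a system of parameters of~$A$. The main engine is the diagram~\eqref{equation:koszul:local} applied to $\bsz$ any sequence extending $\bsy$ within the chosen generators, and $w\in\frakm$ a further generator: because $(\bsz)$ is $\frakm$-primary and $w$ is a unit in $A_w$, we have $H^i_{(\bsz)}(A_w)=0$ for every $i$, so the bottom row of~\eqref{equation:koszul:local} forces $H^n_{(\bsz,w)}(A)\cong H^n_\frakm(A)$ canonically. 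The right square then collapses to
\[
\phi^n_{\bsz,w}\ =\ \phi^n_{\bsz}\circ f_{\bsz,w},\qquad f_{\bsz,w}\colon H^n(\bsz,w;\,A)\to H^n(\bsz;\,A),
\]
where $f_{\bsz,w}$ is the map from the top row of~\eqref{equation:koszul:local} and $\ker f_{\bsz,w}=H^{n-1}(\bsz;\,A)/wH^{n-1}(\bsz;\,A)$. Chaining these as $w_1,\dots,w_s$ are added to $\bsy$ yields $\phi^n_\fraka=\phi^n_{\bsy}\circ f_1\circ\cdots\circ f_s$; the base map $\phi^n_{\bsy}$ is the one of~\eqref{equation:top}, with kernel $(\bsy A)^{\lim}/\bsy A$.

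For (1)$\Rightarrow$(2): $A$ Cohen-Macaulay makes $\bsy$ a regular sequence, so $(\bsy A)^{\lim}=\bsy A$ and $\phi^n_{\bsy}$ is injective; moreover each $(\bsy,w_1,\dots,w_i)$ is $\frakm$-primary with grade $n=\dim A$, so $H^{n-1}(\bsy,w_1,\dots,w_i;\,A)=0$ by depth sensitivity of Koszul cohomology, making every $f_i$ injective and $\phi^n_\fraka$ injective by composition. For the contrapositive of (3)$\Rightarrow$(1): if $A$ is not Cohen-Macaulay, then $\bsy$ fails to be a regular sequence, so $(\bsy A)^{\lim}\supsetneq\bsy A$, i.e.\ $\phi^n_{\bsy}$ already has non-zero kernel, and the classical criterion $H_1(\bsy;\,A)=0\iff\bsy$ is $A$-regular shows $H^{n-1}(\bsy;\,A)=H_1(\bsy;\,A)\neq0$. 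Each $H^{n-1}(\bsy,w_1,\dots,w_i;\,A)$ is annihilated by the $\frakm$-primary ideal $(\bsy,w_1,\dots,w_i)$, hence Artinian; since $w_{i+1}\in\frakm$ acts nilpotently on it, $\ker(\times w_{i+1})\neq0$, and the top row of~\eqref{equation:koszul:local} shows $H^{n-1}(\bsy,\dots,w_{i+1};\,A)$ surjects onto this kernel, propagating non-vanishing. Nakayama's lemma then gives $\ker f_{i+1}\neq0$ at every step, so $\ker\phi^n_\fraka\supseteq\ker f_s\neq0$, contradicting~(3).

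The main obstacle is the inductive bookkeeping for the contrapositive: the non-vanishing of $H^{n-1}$ and of $\ker f_i$ must be transported through every added generator, and the interplay between the Artinian structure of the Koszul cohomology (giving nilpotency of $w_{i+1}$) and Nakayama's lemma (giving non-trivial cokernels) is the critical step.
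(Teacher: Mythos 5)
Your overall architecture (factor $\phi^n_\fraka$ through Koszul cohomologies of subsequences via the diagram~\eqref{equation:koszul:local}, prove (1)$\Rightarrow$(2) from depth sensitivity, and run (3)$\Rightarrow$(1) by contrapositive) parallels the paper, and your treatment of the case $s\ge 1$ is sound: $H^{n-1}(\bsy;\,A)\cong H_1(\bsy;\,A)\neq 0$, the finite-length/Nakayama propagation, and $\ker f_s\subseteq\ker\phi^n_\fraka$ all check out, and in fact amount to the contrapositive of the paper's own argument (where injectivity forces multiplication by $z_t$ to be surjective on $H^{n-1}(z_1,\dots,z_{t-1};\,A)$, hence that module vanishes by Nakayama). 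But there is a genuine gap in the remaining case of (3)$\Rightarrow$(1), namely when $\fraka$ is itself a parameter ideal, i.e.\ $s=0$ and $\fraka=\bsy A$ for a system of parameters $\bsy$. There your entire argument is the sentence ``$\bsy$ fails to be a regular sequence, so $(\bsy A)^{\lim}\supsetneq\bsy A$.'' This is the converse of the easy statement (regular sequence $\Rightarrow$ limit closure equals the ideal) and does not follow from it; it is precisely the nontrivial core of the theorem in this case, equivalent to the assertion that injectivity of $\phi^n_\bsy$ forces $A$ to be Cohen--Macaulay. The paper proves it either by citing \cite[Corollary~2.4]{CHL} or by the multiplicity comparison
\[
\ell(A/(\bsy A)^{\lim})\ \le\ e(\bsy A)\ \le\ \ell(A/\bsy A),
\]
where the left inequality is \cite[Theorem~9]{MQS} and the right one is standard with equality if and only if $A$ is Cohen--Macaulay; injectivity gives $(\bsy A)^{\lim}=\bsy A$, hence equality throughout, hence Cohen--Macaulayness. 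Without some such input your proof of (3)$\Rightarrow$(1) does not cover parameter ideals, and (3) only quantifies over \emph{some} $\frakm$-primary ideal, so this case cannot be avoided.

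A minor secondary point: your construction of a minimal generating set of $\fraka$ whose first $n$ elements form a system of parameters via ``generic $k$-linear combinations'' needs care when the residue field is finite; the paper instead picks $z_{i+1}\in\fraka\setminus\frakm\fraka$ avoiding the minimal primes of $(z_1,\dots,z_i)$ using the version of prime avoidance that allows up to two non-prime ideals \cite[Theorem~81]{Kaplansky}. This is easily repaired, but the parameter-ideal case above is a real missing ingredient.
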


\begin{proof}
Suppose $\fraka$ is an $\frakm$-primary ideal. We claim that there exists a minimal generating set $z_1,\dots,z_t$ for $\fraka$ such that $z_1,\dots,z_n$ is a system of parameters. For $i<n$, it suffices to choose an element $z_{i+1}$, not in any minimal prime of $(z_1,\dots,z_i)A$, such that
\[
z_{i+1}\in\fraka\setminus\frakm\fraka.
\]
This may be accomplished using the version of prime avoidance where up to two of the ideals need not be prime, see for example~\cite[Theorem~81]{Kaplansky}.

Assume (1). Suppose $\bsz$ generates an $\frakm$-primary ideal, and $w\in\frakm$ is an additional element. Using~\eqref{equation:koszul:local}, the vanishing of~$H^{n-1}(\bsz;\,A)$ implies that the map
\[
H^n(\bsz,w;\,A) \to H^n(\bsz;\,A)
\]
is injective. Thus, if the map $\phi^n_\bsz\colon H^n(\bsz;\,A)\to H^n_{\bsz A}(A)=H^n_\frakm(A)$ is injective, then so is the map $\phi^n_{\bsz,w}\colon H^n(\bsz,w;\,A)\to H^n_{(\bsz,w)A}(A) = H^n_\frakm(A)$. Hence the proof of (2) reduces to the case where $\fraka$ is generated by a system of parameters $\bsz$. But then~$\bsz$ is a regular sequence, and the injectivity follows.

It is immediate that (2) implies (3). Next, assume (3), i.e., that $\fraka$ is an $\frakm$-primary ideal and that $H^n(\fraka;\,A)\to H^n_\fraka(A)$ is injective. We first consider the case where $\fraka$ is generated by a system of parameters $\bsz$. In this case, the fact that $\bsz$ is a regular sequence on $A$ follows from \cite[Corollary~2.4]{CHL}, though one may also argue as follows: The injectivity translates as~$(\bsz A)^{\lim}=\bsz A$. Using~$e(\bsz A)$ to denote the multiplicity of the ideal $\bsz A$, one has
\[
e(\bsz A) \le \ell(A/\bsz A),
\]
with equality holding precisely if $A$ is Cohen-Macaulay, see \cite[Corollary~4.7.11]{Bruns:Herzog}. But
\[
\ell(A/(\bsz A)^{\lim}) \le e(\bsz A)
\]
by \cite[Theorem~9]{MQS}, so $A$ is Cohen-Macaulay.

For the general case, take a minimal generating set $z_1,\dots,z_t$ for $\fraka$ such that $z_1,\dots,z_n$ is a system of parameters. Suppose $n<t$. Using~\eqref{equation:koszul:local}, one has a commutative diagram
\[
\CD
H^n(z_1,\dots,z_t;\,A) @>>> H^n(z_1,\dots,z_{t-1};\,A)\\
@V{\phi^n_\bsz}VV @VVV\\
H^n_\fraka(A) @= H^n_\fraka(A)
\endCD
\]
Since $\phi^n_\bsz$ is injective by assumption, it follows that
\[
H^n(z_1,\dots,z_t;\,A) \to H^n(z_1,\dots,z_{t-1};\,A)
\]
is injective, and hence that multiplication by $z_t$ on $H^{n-1}(z_1,\dots,z_{t-1};\,A)$ is surjective. But then, by Nakayama's lemma, $H^{n-1}(z_1,\dots,z_{t-1};\,A)=0$, so $A$ is Cohen-Macaulay.
\end{proof}

The next theorem provides a large class of rings for which the answer to Question~\ref{question:dutta} is negative; while the rings below are graded, the relevant issues are unchanged under localization and completion.

\begin{theorem}
\label{theorem:segre}
Let $k$ be a field; take polynomial rings $k[x_1,\dots,x_b]$ and~$C\colonequals k[y_1,\dots,y_c]$, where $b\ge 3$ and $c\ge 2$. Let $f(\bsx)$ be a homogeneous polynomial such that the hypersurface
\[
B\colonequals k[x_1,\dots,x_b]/(f(\bsx))
\]
is normal. Then the Segre product $A\colonequals B\,\#\,C$ is a normal ring of dimension $b+c-2$.

Let~$\frakm$ denote the homogeneous maximal ideal of $A$. Then the following are equivalent:
\begin{enumerate}[\,\rm(1)]
\item The ring $A$ is Cohen-Macaulay.
\item The polynomial $f(\bsx)$ has degree less than $b$.
\item The natural map $H^{b+c-2}(\frakm;\,A)\to H^{b+c-2}_\frakm(A)$ is injective.
\item The natural map $H^{b+c-2}(\frakm;\,A)\to H^{b+c-2}_\frakm(A)$ is nonzero.
\end{enumerate}
\end{theorem}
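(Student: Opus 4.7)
The plan is to establish the four equivalences via the chain $(1)\Leftrightarrow(3)$, $(1)\Leftrightarrow(2)$, and $(3)\Leftrightarrow(4)$. The first is immediate from Theorem~\ref{theorem:cohen:macaulay} applied with $\fraka=\frakm$, while normality of $A$ and the dimension formula $\dim A=b+c-2$ follow from standard facts about Segre products of graded normal $k$-algebras.

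For $(1)\Leftrightarrow(2)$, I would identify $\operatorname{Proj}(A)=X\times Y$ under the Segre embedding (with $X=\operatorname{Proj}(B)$ an arithmetically Cohen-Macaulay hypersurface and $Y=\PP^{c-1}$), and compute the intermediate local cohomology of $A$ via K\"unneth: for $2\le i\le b+c-3$,
\[
H^i_\frakm(A)_m \;\cong\; \bigoplus_{p+q=i-1} H^p(X,\mathcal{O}(m)) \otimes_k H^q(Y,\mathcal{O}(m)).
\]
Since the sheaf cohomology of $X$ is concentrated in degrees $0$ and $b-2$, and that of $Y=\PP^{c-1}$ in degrees $0$ and $c-1$, the only intermediate contribution arises from $(p,q)=(b-2,0)$ at $i=b-1$. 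Using Serre duality and $\omega_X\cong\mathcal{O}_X(d-b)$, this summand reads $H^{b-1}_\frakm(A)_m \cong (B_{d-b-m})^*\otimes_k C_m$, which vanishes for all $m$ iff no integer $m$ satisfies $0\le m\le d-b$, i.e., iff $d<b$.

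The direction $(3)\Rightarrow(4)$ reduces to showing $H^{b+c-2}(\frakm;\,A)\ne 0$; when $A$ is Cohen-Macaulay, the Koszul depth of $\frakm$ on $A$ equals $\operatorname{depth}A=b+c-2$, forcing $H^i(\frakm;\,A)=0$ for $i<b+c-2$ and $H^{b+c-2}(\frakm;\,A)\ne 0$. The heart of the theorem is thus the contrapositive of $(4)\Rightarrow(1)$: assuming $d\ge b$, show that $\phi^{b+c-2}_\frakm=0$.

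For this main step, observe that the image of $\phi^{b+c-2}_\frakm$ is annihilated by $\frakm$ and hence lies in the $\frakm$-socle of $H^{b+c-2}_\frakm(A)$. Using the K\"unneth description
\[
H^{b+c-2}_\frakm(A)_m \;\cong\; H^{b-2}(X,\mathcal{O}(m)) \otimes_k H^{c-1}(Y,\mathcal{O}(m)),
\]
one checks that the $B$-socle of $\bigoplus_m H^{b-2}(X,\mathcal{O}(m))$ is concentrated in degree $d-b$ (by hypersurface duality), while the $C$-socle of $\bigoplus_m H^{c-1}(Y,\mathcal{O}(m))$ is spanned by the \v Cech class $[1/(y_1\cdots y_c)]$ in degree $-c$; an analysis of the $(B_1\otimes_k C_1)$-action on the tensor product pins the $\frakm$-socle of $H^{b+c-2}_\frakm(A)$ to the single $A$-degree $-c$. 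To conclude, I would pick a degree-$1$ system of parameters $\bsz$ for $A$ and factor $\phi^{b+c-2}_\frakm$ as $H^{b+c-2}(\frakm;\,A)\to A/\bsz A\xrightarrow{\phi^{b+c-2}_\bsz} H^{b+c-2}_\frakm(A)$: the image of the first arrow is annihilated by $\frakm$, hence lies in $(\bsz A:_A\frakm)/\bsz A$, and the task becomes placing this socle inside $\ker\phi^{b+c-2}_\bsz=(\bsz A)^{\lim}/\bsz A$. The main obstacle is this final containment: one must select an SOP adapted to the Segre structure (for instance, a mixture of ``row'' elements $x_iy_1$ and ``column'' elements $x_1y_j$) and exhibit explicit limit-closure witnesses coming from the hypersurface equation $f(\bsx)=0$ of degree $d\ge b$, which serves as the multiplier converting socle elements modulo $\bsz A$ into combinations of the $z_i^{j+1}$.
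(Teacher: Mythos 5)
Your treatment of normality, the dimension count, and the equivalences (1)$\,\Leftrightarrow\,$(2), (1)$\,\Leftrightarrow\,$(3), (3)$\,\Rightarrow\,$(4) agrees in substance with the paper (the paper gets (1)$\,\Leftrightarrow\,$(2) from the K\"unneth formula of Goto--Watanabe, and (1)$\,\Leftrightarrow\,$(3) from Theorem~\ref{theorem:cohen:macaulay}). The problem is the remaining implication---that $d=\deg f\ge b$ forces $\phi^{b+c-2}_\frakm=0$---which is the entire content of the theorem, and your proposal does not prove it. You reduce it to the claim that the image of $H^{b+c-2}(\frakm;\,A)$ in $A/\bsz A$, being killed by $\frakm$, lands inside $(\bsz A)^{\lim}/\bsz A$, and then say one must ``exhibit explicit limit-closure witnesses coming from the hypersurface equation''; no such witnesses are produced, and you yourself flag this containment as the main obstacle. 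Moreover the parameters you suggest do not exist as stated: the row and column elements $x_iy_1,\,x_1y_j$ number $b+c-1>\dim A$, and their common zero locus contains $\{x_1=y_1=0\}$, which is nonempty on $\operatorname{Proj}B\times\PP^{c-1}$ since $b\ge 3$ and $c\ge 2$; already for $\PP^1\times\PP^1$ no subset of the monomials $x_iy_j$ is a system of parameters, so an sop ``adapted to the Segre structure'' in your sense is not available, and a general linear sop needs $k$ infinite, a hypothesis the theorem does not make. The socle analysis of $H^{b+c-2}_\frakm(A)$ is likewise asserted (``one checks'', ``an analysis \dots pins'') rather than carried out. So the argument is incomplete exactly where the work lies.

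For comparison, the paper's proof of this step avoids any choice of parameters and any socle or limit-closure analysis. It presents $A$ as a quotient of $S=k[z_{ij}]$ via $z_{ij}\mapsto x_iy_j$ and observes that the kernel is minimally generated in degrees $2$ and $d$, so that $A_t=[S/I_2(Z)]_t$ for all $t\le d-1$. By K\"unneth, $[H^{b+c-2}_\frakm(A)]_j=0$ for $j>-c$, so it suffices to show $[H^{b+c-2}(\bsz;\,A)]_j=0$ for $j\le -c$, where $\bsz$ is the full set of minimal generators $z_{ij}$ of $\frakm$. For such $j$ the relevant graded strand of $K^\bullet(\bsz;\,A)$ involves only the components $A_t$ with $t\le b-1\le d-1$, hence coincides with the corresponding strand over the determinantal ring $S/I_2(Z)$; the latter is Cohen--Macaulay of dimension $b+c-1$, so its Koszul cohomology in cohomological degree $b+c-2$ vanishes, and the desired vanishing follows. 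If you wish to salvage your route, you would need an actual proof of the limit-closure containment for a genuine system of parameters; the degree-truncation comparison with $S/I_2(Z)$ is what replaces it in the paper.
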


\begin{proof}
Regarding the normality of $A$, note that $B\otimes_k C$ is a polynomial ring over $B$, hence normal; it follows that its pure subring $A$ is normal as well. The dimension of~$A$ and the equivalence of (1) and (2) may be obtained from the K\"unneth formula for local cohomology~\cite[Theorem~4.1.5]{GW}. Note that (1) and (3) are equivalent by Theorem~\ref{theorem:cohen:macaulay}, and that~(3) trivially implies (4).

Set $d\colonequals\deg f(\bsx)$ and assume that $d\ge b$. To complete the proof, we show that the map
\[
H^{b+c-2}(\frakm;\,A)\to H^{b+c-2}_\frakm(A)
\]
is zero. Fix the polynomial ring
\[
S\colonequals k[z_{ij}\mid 1\le i\le b,\ 1\le j\le c]
\]
with the $k$-algebra surjection $\pi\colon S\onto A$, where $z_{ij}\mapsto x_iy_j$. We work with the standard~$\NN$-gradings on~$S$ and $A$, i.e., $\deg z_{ij}=1=\deg x_iy_j$. Note that the minimal generators for~$\ker\pi$ have degree $2$ and degree $d$, with the degree $2$ generators being
\[
z_{ij}z_{rs}-z_{is}z_{rj}.
\]
Specifically,
\begin{equation}
\label{equation:determinantal}
A_t = {[S/I_2(Z)]}_t\qquad \text{for }t\le d-1,
\end{equation}
where $I_2(Z)$ is the ideal generated by the size $2$ minors of the matrix $Z\colonequals(z_{ij})$.

The K\"unneth formula gives
\[
H^{b+c-2}_\frakm(A)=H^{b-1}_{\frakm_B}(B)\,\#\,H^c_{\frakm_C}(C).
\]
Since ${[H^c_{\frakm_C}(C)]}_j=0$ for $j>-c$, it follows that
\[
{[H^{b+c-2}_\frakm(A)]}_j=0\qquad\text{for }j>-c.
\]
The images of $\bsz\colonequals z_{11},\dots,z_{bc}$ are minimal generators for $\frakm$, so it suffices to show that
\[
{[H^{b+c-2}(\bsz;\,A)]}_j=0\qquad\text{for }j\le -c.
\]
Since $\deg z_{ij}=1$ for each $i,j$, the Koszul complex $K^\bullet(\bsz;\,A)$ has the form
\[
\CD
0 @>>> A @>>> \bigoplus A(1) @>>> \bigoplus A(2) @>>> \bigoplus A(3) @>>> \cdots.
\endCD
\]
Fix $j$ with $j\le -c$. The graded strand of the Koszul complex computing ${[H^{b+c-2}(\bsz;\,A)]}_j$ is
\begin{equation}
\label{equation:strand}
\CD
\bigoplus A_{b+c-3+j} @>\alpha>> \bigoplus A_{b+c-2+j} @>\beta>> \bigoplus A_{b+c-1+j},
\endCD
\end{equation}
where the nonzero entries of the matrices for $\alpha$ and $\beta$ are linear forms in the $z_{ij}$. The condition $j\le -c$ implies that $b+c-1+j\le b-1\le d-1$. In light of~\eqref{equation:determinantal}, it follows that the cohomology of~\eqref{equation:strand} coincides with that of
\[
\CD
\bigoplus {[S/I_2(Z)]}_{b+c-3+j} @>\alpha>> \bigoplus {[S/I_2(Z)]}_{b+c-2+j} @>\beta>> \bigoplus {[S/I_2(Z)]}_{b+c-1+j}.
\endCD
\]
But the Koszul cohomology module ${H^{b+c-2}(\bsz;\,S/I_2(Z))}$ is zero since $S/I_2(Z)$ is a Cohen-Macaulay ring of dimension $b+c-1$.
\end{proof}

%%%%%%%%%%%%%%%%%%%%%%%%%%%%%%%%%%%%%%%%%%%%%%%%%%%%%%%%%%%%%%%
\section{Nonvanishing of the map from Koszul to local cohomology}
\label{section:nonvanishing}
%%%%%%%%%%%%%%%%%%%%%%%%%%%%%%%%%%%%%%%%%%%%%%%%%%%%%%%%%%%%%%%

Let $(A,\frakm)$ be a local ring; set $n\colonequals\dim A$. Theorem~\ref{theorem:cohen:macaulay} characterizes the injectivity of the map $\phi^n_\frakm\colon H^n(\frakm;\,A)\to H^n_\frakm(A)$. We next discuss when this map is nonzero.

A \emph{canonical module} for a local ring $(A,\frakm)$ is a finitely generated $A$-module~$\omega_A$ with
\[
\Hom_A(\omega_A,\,E)\ \cong\ H^{\dim A}_\frakm(A),
\]
where $E$ is the injective hull of the residue field $A/\frakm$ in the category of $A$-modules. The canonical module of $A$---when it exists---is unique up to isomorphism. Suppose $A$ is the homomorphic image of a Gorenstein local ring $R$. Then
\[
\Ext_R^{\dim R-\dim A}(A,\,R)
\]
is an $A$-module satisfying the Serre condition $S_2$, and is a canonical module for $A$.

A local ring $A$ is said to be \emph{quasi-Gorenstein} if it is the homomorphic image of a Gorenstein local ring, and $\omega_A$ is isomorphic to $A$. Using Andr\'e's Theorem~\cite{Andre}, one obtains:

\begin{theorem}
\label{theorem:quasi:gorenstein}
Let $(A,\frakm)$ be a local ring that is a homomorphic image of a Gorenstein local ring. Set $n\colonequals \dim A$. Then the natural map $H^n(\frakm;\,\omega_A)\to H_\frakm^n(\omega_A)$ is nonzero. In particular, if $A$ is quasi-Gorenstein, then the natural map $H^n(\frakm;\,A)\to H^n_\frakm(A)$ is nonzero.
\end{theorem}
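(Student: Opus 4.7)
My plan is to deduce the nonvanishing from Andr\'e's theorem applied to $A$, by transferring a nonzero class in $H^n_\frakm(A)$ to $H^n_\frakm(\omega_A)$ via Matlis duality and the natural compatibility of top local cohomology with tensor products.

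First I would fix a system of parameters $\bsz=z_1,\dots,z_n$ for $A$, chosen (by the prime avoidance argument from the proof of Theorem~\ref{theorem:cohen:macaulay}) to extend to a minimal generating set $z_1,\dots,z_t$ of $\frakm$. By Andr\'e's theorem, the class $\eta\colonequals[1/(z_1\cdots z_n)]$ is nonzero in $H^n_\frakm(A)$. The defining property of the canonical module yields a natural isomorphism $H^n_\frakm(A)\cong\Hom_A(\omega_A,E)$, with $E$ the injective hull of the residue field; under this isomorphism, $\eta$ corresponds to a nonzero $A$-linear map $\psi\colon\omega_A\to E$, so one may pick $m\in\omega_A$ with $\psi(m)\ne 0$.

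Next, I would use that for any $A$-module $M$, $H^n_\frakm(M)=\varinjlim_t M/\bsz^t M$ with transition maps given by multiplication by $z_1\cdots z_n$. Since tensor product commutes with direct limits, this yields a natural isomorphism $H^n_\frakm(\omega_A)\cong\omega_A\otimes_A H^n_\frakm(A)$ under which $\phi^n_\bsz(m)$ corresponds to $m\otimes\eta$. Composing with the evaluation pairing $\omega_A\otimes_A\Hom_A(\omega_A,E)\to E$, $m\otimes\phi\mapsto\phi(m)$, produces a natural $A$-linear map $H^n_\frakm(\omega_A)\to E$ that sends $m\otimes\eta$ to $\psi(m)\ne 0$. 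Therefore $\phi^n_\bsz(m)\ne 0$ in $H^n_\frakm(\omega_A)$, so $\phi^n_\bsz\colon H^n(\bsz;\omega_A)\to H^n_\frakm(\omega_A)$ is nonzero. Since $\phi^n_\bsz$ factors as $H^n(\bsz;\omega_A)\to H^n(\frakm;\omega_A)\to H^n_\frakm(\omega_A)$ via the inclusion of Koszul complexes associated to $\bsz\subset\{z_1,\dots,z_t\}$, the map $\phi^n_\frakm$ must be nonzero. The ``in particular'' for quasi-Gorenstein $A$ is immediate from $\omega_A\cong A$.

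The main obstacle is verifying the natural identifications in the second paragraph: that $H^n_\frakm(\omega_A)\cong\omega_A\otimes_A H^n_\frakm(A)$ is a natural isomorphism under which $\phi^n_\bsz$ corresponds to $m\mapsto m\otimes\eta$, and that the evaluation pairing assembles into a natural map $H^n_\frakm(\omega_A)\to E$ compatible with the Matlis-duality identification of the first paragraph. Once these are in place, Andr\'e's theorem delivers the nonzero element directly.
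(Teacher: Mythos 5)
Your first two paragraphs are fine: using the monomial conjecture (Andr\'e, via Hochster's reductions) to get $\eta=[1/(z_1\cdots z_n)]\neq 0$, the right-exactness identification $H^n_\frakm(M)\cong H^n_\frakm(A)\otimes_A M$ coming from the length-$n$ \v Cech complex on the system of parameters, and the evaluation pairing against $\Hom_A(\omega_A,E)\cong H^n_\frakm(A)$ do correctly show that $\phi^n_\bsz\colon H^n(\bsz;\,\omega_A)\to H^n_\frakm(\omega_A)$ is nonzero for a system of parameters $\bsz$. The gap is in the last step. The comparison map between Koszul cohomologies goes in the opposite direction from the one you assert: the chain map $K_\bullet(\bsz;\,A)\to K_\bullet(z_1,\dots,z_t;\,A)$ lifting $A/\bsz A\onto A/\frakm$ dualizes to $K^\bullet(z_1,\dots,z_t;\,\omega_A)\to K^\bullet(\bsz;\,\omega_A)$, so the natural map is $H^n(\frakm;\,\omega_A)\to H^n(\bsz;\,\omega_A)$, and the commuting triangle (iterate the square in~\eqref{equation:koszul:local}, as in the proof of Theorem~\ref{theorem:cohen:macaulay}) reads $\phi^n_\frakm=\phi^n_\bsz\circ\rho$ with $\rho\colon H^n(\frakm;\,\omega_A)\to H^n(\bsz;\,\omega_A)$. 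There is no natural map $H^n(\bsz;\,\omega_A)\to H^n(\frakm;\,\omega_A)$ of the kind you invoke (a chain map $K_\bullet(z_{n+1},\dots,z_t;\,A)\to A$ in degree $0$ would have to kill $z_{n+1},\dots,z_t$), so nonvanishing of $\phi^n_\bsz$ gives only that the composite $\phi^n_\bsz\circ\rho$ \emph{might} be nonzero; it does not force $\phi^n_\frakm\neq 0$.

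This is not a repairable formality: the implication ``$\phi^n_\bsz\neq 0$ for every system of parameters $\Rightarrow\phi^n_\frakm\neq 0$'' is false for general coefficient modules. Indeed, for the rings of Theorem~\ref{theorem:segre} with $\deg f\geq b$ (coefficients in $A$ itself), the monomial conjecture in equal characteristic gives $H^n(\bsz;\,A)\to H^n_\frakm(A)$ nonzero for every system of parameters, yet $H^n(\frakm;\,A)\to H^n_\frakm(A)$ is zero. So the passage from the parameter statement to the statement for the minimal generators of $\frakm$ must use something specific to $\omega_A$, and this is exactly what the paper's proof supplies: by Hochster, the canonical element property (a consequence of the direct summand theorem) is equivalent to the nonvanishing of $\Ext^n_A(A/\frakm,\,\omega_A)\to H^n_\frakm(\omega_A)$, and this map factors as $\Ext^n_A(A/\frakm,\,\omega_A)\to H^n(\frakm;\,\omega_A)\to H^n_\frakm(\omega_A)$ --- a factorization in the correct direction, so nonvanishing of the composite forces nonvanishing of $\phi^n_\frakm$. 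To complete your argument you would need to replace your last sentence by an appeal to the canonical element theorem (or prove directly that some class in the image of $\rho$ survives in $H^n_\frakm(\omega_A)$), which is precisely the nontrivial content here.
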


\begin{proof}
Since the direct summand conjecture is true by \cite{Andre}, the local ring $A$ satisfies the canonical element property, see~\cite[Theorem~2.8]{Hochster:JALG}. By \cite[Theorem~4.3]{Hochster:JALG}, this is equivalent to the map $\Ext_A^n(A/\frakm,\,\omega_A)\to H_\frakm^n(\omega_A)$ being nonzero. This map factors as
\[
\Ext_A^n(A/\frakm,\,\omega_A)\to H^n(\frakm;\,\omega_A)\to H_\frakm^n(\omega_A),
\]
implying that the map $H^n(\frakm;\,\omega_A)\to H_\frakm^n(\omega_A)$ is nonzero.
\end{proof}

The next theorem records an interesting case where we have substantial information on the kernel of the map $H^n(\frakm;\,A)\to H^n_\frakm(A)$.

\begin{theorem}
\label{theorem:buchsbaum}
Let $A$ be a standard graded ring that is finitely generated over a field $k\colonequals A_0$. Set $n\colonequals\dim A$ and $e\colonequals\edim A$, and let $\frakm$ denote the homogeneous maximal ideal of $A$. Suppose there exists an integer $d$ such that for each integer $j$ with $j<n$, one has
\[
H_\frakm^j(A)\ =\ {[H_\frakm^j(A)]}_d.
\]
Set $s_j\colonequals \rank {[H_\frakm^j(A)]}_d$. Then:
\begin{enumerate}[\,\rm(1)]
\item The kernel of the natural map $\phi^n_\frakm\colon H^n(\frakm;\,A)\to H_\frakm^n(A)$ has Hilbert series
\[
\sum_i \rank{\big[\ker\phi^n_\frakm\big]}_i\,T^i
\ =\
s_{n-1}\binom{e}{1}T^{d-1} + s_{n-2}\binom{e}{2}T^{d-2} + \cdots + s_0\binom{e}{n}T^{d-n}.
\]
\item If $\fraka$ is an $\frakm$-primary ideal that is minimally generated by $r$ homogeneous elements, each of degree $t$, then the kernel of $H^n(\fraka;\,A)\to H_\frakm^n(A)$ has Hilbert series
\[
s_{n-1}\binom{r}{1}T^{d-t} + s_{n-2}\binom{r}{2}T^{d-2t} + \cdots + s_0\binom{r}{n}T^{d-nt}.
\]
\end{enumerate}
\end{theorem}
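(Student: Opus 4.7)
The plan is to compare Koszul and local cohomology via the double complex
\[
D^{p,q}\ \colonequals\ K^p(\bsz;\,A)\otimes_A C^q(\bsz;\,A),
\]
where $\bsz=z_1,\ldots,z_r$ are the given minimal generators of $\fraka$ in part~(2), each of degree $t$. Part~(1) is the special case $\fraka=\frakm$, $r=e$, $t=1$, so it suffices to prove~(2). I equip $D^{\bullet,\bullet}$ with the Koszul differential horizontally and the \v{C}ech differential vertically. The two spectral sequences of the total complex $\mathrm{Tot}(D)$ play complementary roles: one computes the total cohomology, the other produces a filtration that witnesses the kernel of~$\phi^n_\bsz$.

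The horizontal-first spectral sequence collapses immediately: for $q\ge 1$, each summand of $D^{\bullet,q}$ has the form $A[z_I^{-1}]$ with $I\ne\emptyset$, and the Koszul complex on $\bsz$ over such a localization is acyclic since some $z_i$ is a unit. Thus only the $q=0$ column contributes, yielding the isomorphism $H^m(\mathrm{Tot}(D))\cong H^m(\bsz;\,A)$ through the quotient $\mathrm{Tot}(D)\twoheadrightarrow D^{\bullet,0}=K^\bullet(\bsz;\,A)$. A direct check---immediate when $r=1$ and extended by tensor factorization---shows that the companion quotient $\mathrm{Tot}(D)\twoheadrightarrow D^{0,\bullet}=C^\bullet(\bsz;\,A)$ induces on cohomology precisely the comparison $\phi^m_\bsz\colon H^m(\bsz;\,A)\to H_\frakm^m(A)$.

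The vertical-first spectral sequence has $E_1^{p,q}=\binom{r}{p}H_\frakm^q(A)(pt)$, with $d_1$ the Koszul differential on the coefficient module. The hypothesis forces $\frakm\cdot H_\frakm^q(A)=0$ for $q<n$, since any positive-degree element of $A$ would shift the unique concentration degree $d$. Hence $d_1$ vanishes on every row $q<n$, giving $E_2^{p,q}=\binom{r}{p}k^{s_q}(pt-d)$ for $q<n$. A degree-tracking argument then shows that $E_\infty^{p,n-p}=E_2^{p,n-p}$ for each $p=1,\ldots,n$: on the rows $q<n$ every piece lives in the single internal degree $d-pt$, so any higher differential between two such positions would have to match distinct internal degrees; and the only candidate incoming differential from the row $q=n$ to a position on the diagonal $p+q=n$ with $p\ge 1$ would have source at horizontal coordinate $-1$, hence zero.

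Consequently $\ker\phi^n_\bsz=F^1 H^n(\mathrm{Tot}(D))$, whose associated graded is $\bigoplus_{p=1}^n\binom{r}{p}k^{s_{n-p}}(pt-d)$. Summing Hilbert series and reindexing $j=n-p$ yields the formula in~(2); specializing to $r=e$, $t=1$ gives~(1). The main obstacle is the edge-map identification used in the second paragraph: one must argue carefully that the filtration quotient $H^n(\mathrm{Tot}(D))\twoheadrightarrow E_\infty^{0,n}\hookrightarrow H_\frakm^n(A)$ is precisely the map $\phi^n_\bsz$ induced by $K^\bullet\to C^\bullet$, rather than merely a natural candidate with the correct source and target.
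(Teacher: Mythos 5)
Your argument is correct, but it takes a genuinely different route from the paper. The paper works with the graded normalized dualizing complex: applying $R\Hom_A(K^\bullet,-)$ to the triangle $\omega_A[n]\to\omega_A^\bullet\to\tau_{>-n}\omega_A^\bullet$ and dualizing into ${}^*E$, it identifies $\ker\big(H^n(\fraka;\,A)\to H^n_\frakm(A)\big)$ with $H^n\big(R\Hom_A(K^\bullet,\tau_{>-n}\omega_A^\bullet)^\vee\big)$, and then invokes Schenzel's theorems (the hypothesis makes $A$ Buchsbaum and splits $\tau_{>-n}\omega_A^\bullet$ into shifted copies of $k(d)$ with zero differentials) to compute this via a double complex all of whose differentials vanish. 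You instead run the two spectral sequences of the Koszul--\v Cech double complex $K^\bullet(\bsz;\,A)\otimes_A C^\bullet(\bsz;\,A)$: the horizontal-first one collapses to $H^\bullet(\bsz;\,A)$, the vertical-first one has $E_1$ rows $H^q_\frakm(A)(pt)^{\binom{r}{p}}$, and the hypothesis enters only through the concentration of $H^q_\frakm(A)$, $q<n$, in the single internal degree $d$, which kills $d_1$ on those rows and, by your internal-degree bookkeeping, all later differentials touching the diagonal $p+q=n$ with $p\ge1$; the kernel is then $F^1H^n$ of the column filtration. Your degeneration analysis is sound (outgoing differentials from $(p,n-p)$, $p\ge1$, land on rows below $n$ in a different internal degree; the only potential incoming differential from the row $q=n$ would originate at horizontal position $-1$). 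What each approach buys: yours is more elementary and self-contained, avoiding dualizing complexes and the Buchsbaum/Schenzel input altogether, while the paper's identification of the kernel with the dual of $R\Hom_A(K^\bullet,\tau_{>-n}\omega_A^\bullet)$ is valid without the degree hypothesis and places the result in a more structural (Buchsbaum-theoretic) context. As for the obstacle you flag, your proposed resolution does work: for $r=1$ the two projections from the total complex to $C^\bullet(z;\,A)$ (directly, and through $K^\bullet(z;\,A)$ followed by $\phi^\bullet_z$) differ by the explicit homotopy given by $\frac1z$ on the corner term $A_z$, and since homotopies are preserved under tensoring bounded complexes of flat modules, the general case follows by factoring the double complex as a tensor product over the $z_i$; this identifies the edge map with $\phi^n_\bsz$ up to sign, which does not affect the kernel.
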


\begin{proof}
It suffices to prove the second assertion. Fix minimal generators $\bsz$ of $\fraka$. The Koszul complex $K^\bullet\colonequals K^\bullet(\bsz;\,A)$ takes the form
\[
\CD
0 @>>> A @>>> {A(t)}^{\binom{r}{1}} @>>> {A(2t)}^{\binom{r}{2}} @>>> \cdots @>>> {A(rt)}^{\binom{r}{r}} @>>> 0,
\endCD
\]
and lives in cohomology degree $0,1,\dots,r$.

Let $\omega_A^\bullet$ be the graded normalized dualizing complex of $A$, in which case $\omega_A=H^{-n}(\omega_A^\bullet)$ is the graded canonical module of $A$. Set $(-)^\vee=\Hom_A(-,\,{}^*E)$, where ${}^*E$ is the injective hull of $A/\frakm$ in the category of graded $A$-modules. Applying $R\Hom_A(K^\bullet,\,-)$ to the triangle
\[
\CD
\omega_A[n] @>>> \omega_A^\bullet @>>> \tau_{>-n}\omega_A^\bullet @>{+1}>>
\endCD
\]
gives
\[
\CD
R\Hom_A(K^\bullet,\,\omega_A[n]) @>>> R\Hom_A(K^\bullet,\,\omega_A^\bullet) @>>> R\Hom_A(K^\bullet,\,\tau_{>-n}\omega_A^\bullet) @>{+1}>>.
\endCD
\]
Since the complex $K^\bullet$ has Artinian cohomology, applying the functor $(-)^\vee$ gives
\[
\CD
{R\Hom_A(K^\bullet,\,\tau_{>-n}\omega_A^\bullet)}^\vee @>>> K^\bullet @>>> {R\Hom_A(K^\bullet,\,\omega_A[n])}^\vee @>{+1}>>.
\endCD
\]
This induces the exact sequence
\[
\CD
0\ @>>> H^n({R\Hom_A(K^\bullet,\,\tau_{>-n}\omega_A^\bullet)}^\vee) @>>> H^n(K^\bullet) @>>> H^n({R\Hom_A(K^\bullet,\,\omega_A[n])}^\vee),
\endCD
\]
where the zero on the left is because $R\Hom_A(K^\bullet,\,\omega_A[n])^\vee$ lives in cohomological degrees~$n, n+1,\dots, n+r$. The module $H^n({R\Hom_A(K^\bullet,\,\omega_A[n])}^\vee)$ is the kernel of the map
\[
\CD
H_\frakm^n(A)@>>> {H_\frakm^n(A)(t)}^{\binom{r}{1}}
\endCD
\]
given by multiplication by $z_i$ in the $i$-th coordinate. It follows that
\[
H^n({R\Hom_A(K^\bullet,\,\omega_A[n])}^\vee)\ =\ 0:_{H_\frakm^n(A)}\fraka.
\]
The map $H^n(K^\bullet)\to H^n(R\Hom_A(K^\bullet,\,\omega_A[n])^\vee)$ may be identified naturally with
\[
H^n(\fraka;\,A)\to 0:_{H_\frakm^n(A)}\fraka,
\]
which has the same kernel as $H^n(\fraka;\,A)\to H_\frakm^n(A)$ since $H^n(\fraka;\,A)$ is annihilated by~$\fraka$. Summarizing, one has
\[
H^n(R\Hom_A(K^\bullet,\,\tau_{>-n}\omega_A^\bullet)^\vee)\ =\ \ker\big(H^n(\fraka;\,A)\to H_\frakm^n(A)\big).
\]

The hypothesis that $H_\frakm^j(A)={[H_\frakm^j(A)]}_d$ for each integer $j$ with $j<n$ implies that~$R$ is Buchsbaum, see \cite[Theorem~3.1]{Schenzel}. Acknowledging the abuse of notation, we reuse the symbol $k$ below for the residue field $A/\frakm$. By \cite[Theorem~2.3]{Schenzel}, $\tau_{>-n}\omega_A^\bullet$ is quasi-isomorphic to the complex
\[
\CD
0 @>>> k^{s_{n-1}}(d) @>>> \cdots @>>> k^{s_1}(d) @>>> k^{s_0}(d) @>>> 0
\endCD
\]
of graded $k$-vector spaces, each in degree $-d$, with zero differentials; note that $k^{s_j}$ in the complex above has cohomology degree $-j$. Using this representative of $\tau_{>-n}\omega_A^\bullet$ to compute~$R\Hom_A(K^\bullet,\,\tau_{>-n}\omega_A^\bullet)^\vee$, the corresponding double complex takes the form:
\[
\CD
@. @AAA @AAA @. @AAA\\
0 @>>> {k(2t-d)}^{\binom{r}{2}s_0} @>>> {k(2t-d)}^{\binom{r}{2}s_1} @>>> \cdots @>>> {k(2t-d)}^{\binom{r}{2}s_{n-1}} @>>> 0\\
@. @AAA @AAA @. @AAA\\
0 @>>> {k(t-d)}^{\binom{r}{1}s_0} @>>> {k(t-d)}^{\binom{r}{1}s_1} @>>> \cdots @>>> {k(t-d)}^{\binom{r}{1}s_{n-1}} @>>> 0\\
@. @AAA @AAA @. @AAA\\
0 @>>> {k(-d)}^{s_0} @>>> {k(-d)}^{s_1} @>>> \cdots @>>> {k(-d)}^{s_{n-1}} @>>> 0
\endCD
\]
Note that all differentials are zero: the horizontal ones in light of the chosen representative for $\tau_{>-n}\omega_A^\bullet$, and the vertical ones since each column is a Koszul complex of the form~$K^\bullet(\fraka;\,k^{s_j})(-d)$. The bottom row sits in cohomology degree $0,1,\dots,n-1$ as it is the graded dual of $\tau_{>-n}\omega_A^\bullet$. Taking the total complex, one obtains
\[
H^n(R\Hom_A(K^\bullet,\,\tau_{>-n}\omega_A^\bullet)^\vee)
\ =\ 
k(t-d)^{\binom{r}{1}s_{n-1}}\oplus k(2t-d)^{\binom{r}{2}s_{n-2}}\oplus\cdots\oplus k(nt-d)^{\binom{r}{n}s_0}.\qedhere
\]
\end{proof}

We illustrate the preceding results with a number of examples, beginning with an elementary example taken from ~\cite[Remark~3.9]{Dutta:LMS}:

\begin{example}
Let $k$ be a field. Set $A\colonequals k[x,y]/(x^2, xy)$. Then $H^0_\frakm(A)=xA$, which is a rank~$1$ vector space, concentrated in degree~$1$. By Theorem~\ref{theorem:buchsbaum}, $\ker\big(H^1(\frakm;\,A)\to H_\frakm^1(A)\big)$ has rank $2$, and is concentrated in degree~$0$. This is confirmed by examining $K^\bullet(\frakm;\,A)$, i.e.,
\[
\CD
0 @>>> A @>{\begin{pmatrix}x\\ y\end{pmatrix}}>> A^2(1) @>{\begin{pmatrix}y&-x\end{pmatrix}}>> A(2) @>>> 0,
\endCD
\]
to observe that $H^1(\frakm;\,A)\cong k^2$, with the generators corresponding to
\[
\begin{pmatrix}0\\ x\end{pmatrix},\quad
\begin{pmatrix}0\\ y\end{pmatrix},
\]
and that each of these maps to zero under $H^1(\frakm;\,A)\to H^1_\frakm(A)$.

For integers $t\ge2$, Theorem~\ref{theorem:buchsbaum} says that the kernel of $H^1(y^t;\,A)\to H_\frakm^1(A)$ has Hilbert series~$T^{1-t}$. Indeed, this map is
\[
A(t)/Ay^t\to A_y/A,
\]
with the kernel being the rank $1$ vector space generated by $x\ \in\ {[A(t)/Ay^t]}_{1-t}$.

Lastly, note that the image of $H^1(y^t;\,A)\to H_\frakm^1(A)$ has Hilbert series
\[
T^{-1}+T^{-2}+T^{-3}+\dots+T^{-t}
\]
which agrees with the Hilbert series of $H_\frakm^1(A)$ as $t\to\infty$.
\end{example}

\begin{example}
Let $A$ be as in Theorem~\ref{theorem:segre} where $f(\bsx)$ has degree $b$. Then the ring $A$ is not Cohen-Macaulay, and the K\"unneth formula gives
\[
H_\frakm^j(A)\ =\begin{cases}
k &\text{ if $j=b-1$,}\\
0 &\text{ if $j\neq b-1,\ \ b+c-2$}.
\end{cases}
\]
The map $H^{b+c-2}(\frakm;\,A)\to H_\frakm^{b+c-2}(A)$ is zero by Theorem~\ref{theorem:segre}, while Theorem~\ref{theorem:buchsbaum} says that its kernel, i.e., $H^{b+c-2}(\frakm;\,A)$, has Hilbert series
\[
\binom{bc}{c-1}T^{-(c-1)}.
\]
\end{example}

\begin{example}
Let $k$ be a field; take polynomial rings $k[x_1,\dots,x_b]$ and~$C\colonequals k[y_1,\dots,y_c]$, where $b\ge 3$ and $c\ge 3$. Let $f(\bsx)$ and $g(\bsy)$ be homogeneous polynomials of degrees $b$ and~$c$ respectively, such that the hypersurfaces
\[
B\colonequals k[x_1,\dots,x_b]/(f(\bsx))\quad\text{ and }\quad C\colonequals k[y_1,\dots,y_c]/(g(\bsy))
\]
are normal. The ring $A\colonequals B\,\#\,C$ is normal, and of dimension $b+c-3$; let $\frakm$ denote the homogeneous maximal ideal of $A$. Then
\[
H_\frakm^j(A)\ =\begin{cases}
0 &\text{ if $j\neq b-1,\ \ c-1$, \ or $ b+c-3$,}\\
k &\text{ if $b\neq c$, and $j$ equals either $b-1$ or $c-1$,} \\
k^2 &\text{ if $b=c$ and $j=b-1$.}
\end{cases}
\]
Since $B$ and $C$ are each Gorenstein with $a$-invariant $0$, the ring $A$ is quasi-Gorenstein by~\cite[Theorem~4.3.1]{GW}. Hence Theorem~\ref{theorem:quasi:gorenstein} says that $H^{b+c-3}(\frakm;\,A)\to H_\frakm^{b+c-3}(A)$ is nonzero, while Theorem~\ref{theorem:buchsbaum} implies that the kernel has Hilbert series
\[
\binom{bc}{c-2}T^{-(c-2)} + \binom{bc}{b-2}T^{-(b-2)}.
\]
\end{example}

We conclude this section with an example where $A$ is not Cohen-Macaulay or quasi-Gorenstein, but the map $H^{\dim A}(\frakm;\,A)\to H^{\dim A}_\frakm(A)$ is nonzero:

\begin{example}
Let $k$ be a field, and set $A\colonequals k[x,y,z]/(xz, y^2, yz, z^2)$. Then $\dim A=1$, and~$x$ is a homogeneous parameter. The Koszul complex $K^\bullet(\frakm;\,A)$ is
\[
\CD
0 @>>> A @>{\begin{pmatrix}x\\ y\\ z\end{pmatrix}}>> A^3(1) @>{\begin{pmatrix}0&-z&y\\ -z&0&x\\ -y&x&0 \end{pmatrix}}>> A^3(2)
@>{\begin{pmatrix}x&-y&z\end{pmatrix}}>> A(3) @>>> 0,
\endCD
\]
from which it follows that $H^1(\frakm;\,A)\cong k^4$, with the four generators corresponding to
\[
\begin{pmatrix}z\\ 0\\ 0\end{pmatrix},\quad
\begin{pmatrix}0\\ z\\ 0\end{pmatrix},\quad
\begin{pmatrix}0\\ 0\\ z\end{pmatrix},\quad
\begin{pmatrix}y\\ 0\\ 0\end{pmatrix}.
\]
The first three generators map to zero under $H^1(\frakm;\,A)\to H^1_\frakm(A)$, whereas the fourth maps to the nonzero element of $H^1_\frakm(A)$ represented by the \v Cech cocycle
\[
\left(\frac{y}{x},0,0\right)\ \in\ A_x\oplus A_y\oplus A_z.
\]
The ring $A$ is not $S_2$, and hence not quasi-Gorenstein.

Note that $H^0_\frakm(A)=zA$ is a rank~$1$ vector space concentrated in degree~$1$, so Theorem~\ref{theorem:buchsbaum} also confirms that $\ker\big(H^1(\frakm;\,A)\to H_\frakm^1(A)\big)$ has rank $3$, and is concentrated in degree~$0$.

For integers $t\ge2$, Theorem~\ref{theorem:buchsbaum} says that the kernel of $H^1(x^t;\,A)\to H_\frakm^1(A)$ has Hilbert series~$T^{1-t}$. Indeed, $H^1(x^t;\,A)$ has Hilbert series
\[
1 + 2T^{-1} + 2T^{-2} + \dots + 2T^{2-t} + 3T^{1-t} + T^{-t},
\]
while $H_\frakm^1(A)$ has Hilbert series $1 + 2T^{-1} + 2T^{-2} + 2T^{-3}+ \cdots$.
\end{example}

%%%%%%%%%%%%%%%%%%%%%%%%%%%%%%%%%%%%%%%%%%%%%%%%%%%%%%%%%%%%%%%
\section{Stanley-Reisner rings}
\label{section:stanley:reisner}
%%%%%%%%%%%%%%%%%%%%%%%%%%%%%%%%%%%%%%%%%%%%%%%%%%%%%%%%%%%%%%%

Theorem~\ref{theorem:buchsbaum} is perhaps best viewed in the broader context of filtering a local cohomology module $H^n_\frakm(A)$---that is typically not finitely generated---using natural finitely generated submodules such as the images of Koszul cohomology or $\Ext$ modules, themes that are pursued at length in \cite{BBLSZ} and \cite{SW}. This turns out to be fascinating even in the context of Stanley-Reisner rings; in this section, we examine the implications of Theorem~\ref{theorem:buchsbaum} in some extensively studied examples such as triangulations of the torus and of the real projective plane. First, some generalities:

Let~$\Delta$ be a simplicial complex with vertices $1,\dots,e$. For $k$ a field, consider the polynomial ring~$k[x_1,\dots,x_e]$ and the ideal $\fraka$ generated by the square-free monomials
\[
x_{i_1}\cdots x_{i_r}
\]
such that $\{i_1,\dots, i_r\}$ is not a face of $\Delta$. The \emph{Stanley-Reisner ring} of $\Delta$ over $k$ is the ring
\[
A\colonequals k[x_1,\dots,x_e]/\fraka.
\]
The ring $A$ has a $\ZZ^e$-grading with $\deg x_i$ being the $i$-th unit vector; this induces a grading on the \v Cech complex $C^\bullet(\bsx;\,A)$. The module
\[
A_{x_{i_1}\cdots x_{i_r}}
\]
is nonzero precisely if $x_{i_1}\cdots x_{i_r}\notin\fraka$, equivalently $\{i_1,\dots,i_r\}\in\Delta$. Hence the graded strand of~$C^\bullet(\bsx;\,A)$ in degree $\bzero\colonequals(0,\dots,0)$ is the complex that computes the reduced simplicial cohomology $\tilde{H}^\bullet(\Delta;\,k)$, with the indices shifted by one, so
\[
{[H^j_\frakm(A)]}_\bzero\ \cong\ \tilde{H}^{j-1}(\Delta;\,k)\qquad\text{ for }j\ge0\,.
\]

Theorem~\ref{theorem:buchsbaum} yields the following corollary for Stanley-Reisner rings:

\begin{corollary}
\label{corollary:stanley:reisner}
Let $A\colonequals k[x_1,\dots,x_e]/\fraka$ be an equidimensional Stanley-Reisner ring over a field $k$, such that~$A_\frakp$ is Cohen-Macaulay for each $\frakp\in\Spec A\smallsetminus\{\frakm\}$. Set $n\colonequals\dim A$. For $t$ a positive integer, set $\frakm^{[t]}$ to be the ideal of $A$ generated by the images of $x_1^t,\dots,x_e^t$.

Then the kernel of the natural map $H^n(\frakm^{[t]};\,A)\to H_\frakm^n(A)$ has Hilbert series
\[
s_{n-1}\binom{e}{1}T^{-t} + s_{n-2}\binom{e}{2}T^{-2t} + \cdots + s_0\binom{e}{n}T^{-nt},
\]
where $s_j\colonequals \rank \tilde{H}^{j-1}(\Delta;\,k)$, with $\Delta$ denoting the underlying simplicial complex.
\end{corollary}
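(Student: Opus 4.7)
The plan is to apply Theorem~\ref{theorem:buchsbaum}(2) directly to $\fraka = \frakm^{[t]}$, which is $\frakm$-primary and, with every vertex of $\Delta$ assumed present so that each $x_i$ is nonzero in $A$, is minimally generated by the $e$ images of $x_1^t,\dots,x_e^t$, each of degree $t$. Two ingredients need to be verified in order to invoke that theorem: (i) the concentration hypothesis $H_\frakm^j(A) = [H_\frakm^j(A)]_d$ for each $j<n$, with $d=0$; and (ii) the identification $s_j = \rank \tilde{H}^{j-1}(\Delta;\,k)$.

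For (ii), I would simply invoke the identification already recorded in the section: the $\bzero$-th strand (in the finer $\ZZ^e$-grading) of the \v Cech complex $C^\bullet(\bsx;\,A)$ is the reduced simplicial cochain complex of $\Delta$ with indices shifted by one. Taking cohomology gives $[H_\frakm^j(A)]_\bzero \cong \tilde{H}^{j-1}(\Delta;\,k)$, which has $k$-rank $s_j$ by definition.

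For (i), I would use Hochster's formula for local cohomology of Stanley-Reisner rings. For $\alpha\in\ZZ^e$, the piece $[H_\frakm^j(A)]_\alpha$ vanishes unless $\alpha\le\bzero$, and for such $\alpha$ with $F\colonequals\{i:\alpha_i<0\}\in\Delta$ one has
\[
\dim_k [H_\frakm^j(A)]_\alpha \ =\ \dim_k \tilde{H}^{j-|F|-1}(\operatorname{link}_\Delta F;\,k).
\]
The hypothesis that $A_\frakp$ is Cohen-Macaulay for every nonmaximal $\frakp$, combined with equidimensionality of $A$, forces $\operatorname{link}_\Delta F$ to be a Cohen-Macaulay complex of dimension $n-|F|-1$ for every nonempty face $F\in\Delta$; Reisner's criterion then yields $\tilde{H}^{j-|F|-1}(\operatorname{link}_\Delta F;\,k) = 0$ whenever $j<n$. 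Thus the only nonzero graded piece of $H_\frakm^j(A)$ for $j<n$ sits at $\alpha=\bzero$, so in the coarse $\ZZ$-grading $H_\frakm^j(A) = [H_\frakm^j(A)]_0$, as required.

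With (i) and (ii) in hand, Theorem~\ref{theorem:buchsbaum}(2) applied with $d=0$, $r=e$, and common generator degree $t$ yields the claimed Hilbert series verbatim. The main point that requires real care is (i)---translating the Cohen-Macaulay-on-the-punctured-spectrum hypothesis into the vanishing of reduced simplicial cohomology of all nonempty-face links in subtop degrees---after which Theorem~\ref{theorem:buchsbaum} handles the remaining bookkeeping.
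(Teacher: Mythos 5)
Your proof is correct, but it verifies the key hypothesis of Theorem~\ref{theorem:buchsbaum} by a genuinely different route than the paper. Both arguments reduce to showing that $H^j_\frakm(A)={[H^j_\frakm(A)]}_0$ for $j<n$ and then quoting Theorem~\ref{theorem:buchsbaum}(2) with $d=0$, $r=e$, and generator degree $t$, together with the identification ${[H^j_\frakm(A)]}_\bzero\cong\tilde{H}^{j-1}(\Delta;\,k)$. You establish the concentration in degree $\bzero$ via Hochster's formula: the hypotheses force $\operatorname{link}_\Delta F$ to be Cohen-Macaulay of dimension $n-|F|-1$ for every nonempty face $F$ (using equidimensionality for the dimension count and, implicitly, the standard localization $A_{x_i}\cong k[\operatorname{link}_\Delta\{i\}][x_i^{\pm1}]$ to pass from Cohen-Macaulayness on the punctured spectrum to Cohen-Macaulay links), and Reisner's criterion then kills all graded pieces in $\ZZ^e$-degrees $\bsi\neq\bzero$ for $j<n$. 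The paper instead avoids Hochster's formula and Reisner's criterion entirely: it first notes that equidimensionality plus Cohen-Macaulayness off $\frakm$ gives finite length of $H^j_\frakm(A)$ for $j<n$, and then uses that the endomorphism $x_i\mapsto x_i^2$ is pure \cite[Example~2.2]{SW}, so the induced injections ${[H^j_\frakm(A)]}_\bsi\to{[H^j_\frakm(A)]}_{2\bsi}$ combined with finite length force concentration in degree $\bzero$. Your route is more combinatorially explicit and in fact recovers the finer $\ZZ^e$-graded vanishing with precise identifications of all graded pieces, at the cost of importing Hochster's formula, Reisner's criterion, and the link-localization dictionary; the paper's purity argument is shorter and self-contained given \cite{SW}. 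Either way the remaining bookkeeping is exactly as you describe, and your minimality observation for the generators $x_1^t,\dots,x_e^t$ of $\frakm^{[t]}$ is correct since the vertices $1,\dots,e$ all lie in $\Delta$.
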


\begin{proof}
The hypotheses that $A$ is equidimensional and that $A_\frakp$ is Cohen-Macaulay for $\frakp\neq\frakm$ imply that each local cohomology module $H_\frakm^j(A)$, for $j<n$, has finite length. Let $F$ denote the $k$-algebra endomorphism of $A$ with $x_i\mapsto x_i^2$ for each $i$. Then $F$ is a pure endomorphism by \cite[Example~2.2]{SW}, so the induced map
\[
\tilde{F}\colon H_\frakm^j(A) \to H_\frakm^j(A)
\]
is injective for each $j$. Using the $\ZZ^e$-grading from the preceding discussion, $\tilde{F}$ restricts to
an injective map
\[
{[H_\frakm^j(A)]}_\bsi\to{[H_\frakm^j(A)]}_{2\bsi}
\]
for each $\bsi\in\ZZ^e$. But~$H_\frakm^j(A)$ has finite length for $j<n$, so
\[
H_\frakm^j(A)\ =\ {[H_\frakm^j(A)]}_\bzero
\]
for each $j<n$. The result now follows by Theorem~\ref{theorem:buchsbaum}.
\end{proof}

We begin by using Corollary~\ref{corollary:stanley:reisner} to shed light on Example~\ref{example:real}:

\begin{example}
\label{example:two:lines}
Consider the simplicial complex corresponding to two disjoint line segments; the corresponding Stanley-Reisner ring $A$, over a field $k$, is
\[
k[x_1,\,x_2,\,x_3,\,x_4]/(x_1x_3,\ x_1x_4,\ x_2x_3,\ x_2x_4)
\]
The ring $A$ has dimension $2$, and is not Cohen-Macaulay since
\[
{[H^1_\frakm(A)]}_\bzero\ \cong\ \tilde{H}^0(\Delta;\,k)\ \cong\ k.
\]
Since $A$ is equidimensional and Cohen-Macaulay on the punctured spectrum, Corollary~\ref{corollary:stanley:reisner} says that the kernel of $H^2(\frakm^{[t]};\,A)\to H_\frakm^2(A)$ has Hilbert series $4T^{-t}$ for each~$t\ge1$. This is consistent with the following table, where we record the rank of the vector spaces
\[
{[H^2(\frakm^{[t]};\,A)]}_j
\]
as computed by \emph{Macaulay2} \cite{GS}, in the case $k$ is the field of rational numbers. Entries that are $0$ are omitted. The last row records the rank of
\[
\left[\lim_{t\rightarrow\infty} H^2(\frakm^{[t]};\,A)\right]_j\ =\ \left[H_\frakm^2(A)\right]_j,
\]
which may be obtained using the exact sequence
\[
\CD
0 @>>> A @>>> A/(x_1,x_2)\oplus A/(x_3,x_3) @>>> A/\frakm @>>> 0
\endCD
\]
and the induced isomorphism $H_\frakm^2(A)\ \cong\ H_\frakm^2(A/(x_1,x_2))\oplus H_\frakm^2(A/(x_3,x_4)).$

\begin{table}[H]
\begin{center}
\begin{tabular}{|c||C{.6cm}|C{.6cm}|C{.6cm}|C{.6cm}|C{.6cm}|C{.6cm}|C{.6cm}|C{.6cm}|C{.6cm}|C{.6cm}|}
\hline
\backslashbox{$t$}{$j$} & $-1$ & $-2$ & $-3$ & $-4$ & $-5$ & $-6$ & $-7$ & $-8$ & $-9$ & $-10$\\
\hline
\hline
$1$ & $4$ & & & & & & & & & \\
\hline
$2$ & & $6$ & $4$ & & & & & & & \\
\hline
$3$ & & $2$ & $8$ & $6$ & $4$ & & & & & \\
\hline
$4$ & & $2$ & $4$ & $10$ & $8$ & $6$ & $4$ & & & \\
\hline
$5$ & & $2$ & $4$ & $6$ & $12$ & $10$ & $8$ & $6$ & $4$ & \\
\hline
$6$ & & $2$ & $4$ & $6$ & $8$ & $14$ & $12$ & $10$ & $8$ & $6$ \\
\hline
$7$ & & $2$ & $4$ & $6$ & $8$ & $10$ & $16$ & $14$ & $12$ & $10$ \\
\hline
$8$ & & $2$ & $4$ & $6$ & $8$ & $10$ & $12$ & $18$ & $16$ & $14$ \\
\hline
$9$ & & $2$ & $4$ & $6$ & $8$ & $10$ & $12$ & $14$ & $20$ & $18$ \\
\hline
$10$ & & $2$ & $4$ & $6$ & $8$ & $10$ & $12$ & $14$ & $16$ & $22$ \\
\hline
\hline
$\lim_{t\rightarrow\infty}$ & & $2$ & $4$ & $6$ & $8$ & $10$ & $12$ & $14$ & $16$ & $18$ \\
\hline
\end{tabular}
\end{center}
\end{table}
Note that for the ring $A\colonequals\RR[x,y,ix,iy]$ in Example~\ref{example:real}, the tensor product $A\otimes_\RR\CC$ is isomorphic to $\CC[x_1,x_2,x_3,x_4]/(x_1x_3,\ x_1x_4,\ x_2x_3,\ x_2x_4)$ since
\[
A\ \cong\ \RR[x,y,u,v]/(x^2+u^2,\ y^2+v^2,\ vx-uy,\ xy+uv),
\]
and hence $A\otimes_\RR\CC$ is isomorphic to $\CC[x,y,u,v]/\big((u-ix,\ v-iy)\cap(u+ix,\ v+iy)\big)$.
\end{example}

\begin{example}
Consider the triangulation of the torus below.
\begin{figure}[ht]
\[
\caption{A triangulation of the torus}
\psfrag{1}{\footnotesize$1$}
\psfrag{2}{\footnotesize$2$}
\psfrag{3}{\footnotesize$3$}
\psfrag{4}{\footnotesize$4$}
\psfrag{5}{\footnotesize$5$}
\psfrag{6}{\footnotesize$6$}
\psfrag{7}{\footnotesize$7$}
\psfrag{8}{\footnotesize$8$}
\psfrag{9}{\footnotesize$9$}
\includegraphics[width=3cm]{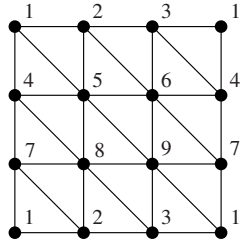}
\]
\end{figure}

The corresponding Stanley-Reisner ring $A$ is the homomorphic image of~$k[x_1,\dots,x_9]$ modulo the ideal generated by the monomials $x_1x_6$, $x_1x_8$, $x_2x_4$, $x_2x_9$, $x_3x_5$, $x_3x_7$, $x_4x_9$, $x_5x_7$, $x_6x_8$, $x_1x_2x_3$, $x_1x_4x_7$, $x_1x_5x_9$, $x_2x_5x_8$, $x_2x_6x_7$, $x_3x_4x_8$, $x_3x_6x_9$, $x_4x_5x_6$, and $x_7x_8x_9$. The ring $A$ has dimension $3$, and is not Cohen-Macaulay since
\[
{[H^2_\frakm(A)]}_\bzero\ \cong\ \tilde{H}^1(\Delta;\,k)\ \cong\ k^2.
\]
It is readily verified that $A$ is Cohen-Macaulay on the punctured spectrum. The canonical module of $A$ may be computed as
\[
\omega_A\ =\ \Ext^6_{k[\bsx]}(A,\,\omega_{k[\bsx]})\ \cong\ A,
\]
so the ring $A$ is quasi-Gorenstein. Hence the natural map $H^3(\frakm;\,A)\to H^3_\frakm(A)$ is nonzero, though not injective. Indeed, the module $H^3(\frakm;\,A)$ has Hilbert series $1+18T^{-1}$, while Corollary~\ref{corollary:stanley:reisner} implies that the kernel of $H^3(\frakm^{[t]};\,A)\to H_\frakm^3(A)$ has Hilbert series $18T^{-t}$ for each integer $t\ge1$. The following table records the rank of the vector spaces
\[
{[H^3(\frakm^{[t]};\,A)]}_j,
\]
as computed by \emph{Macaulay2} in the case $k$ is the field of rational numbers. The last row records the rank of
\[
\left[\lim_{t\rightarrow\infty} H^3(\frakm^{[t]};\,A)\right]_j\ =\ \left[H_\frakm^3(A)\right]_j,
\]
which may be obtained using the Hilbert series of $A$: since the ring $A$ is quasi-Gorenstein with $a$-invariant $0$, one has $\rank\left[H_\frakm^3(A)\right]_j=\rank {[A]}_{-j}$.

\begin{table}[H]
\begin{center}
\begin{tabular}{|c||C{.6cm}|C{.6cm}|C{.6cm}|C{.6cm}|C{.6cm}|C{.6cm}|C{.6cm}|C{.6cm}|C{.6cm}|C{.6cm}|C{.6cm}|}
\hline
\backslashbox{$t$}{$j$} & $0$ & $-1$ & $-2$ & $-3$ & $-4$ & $-5$ & $-6$ & $-7$ & $-8$ & $-9$ & $-10$\\
\hline
\hline
$1$ & $1$ & $18$ & & & & & & & & & \\
\hline
$2$ & $1$ & $9$ & $45$ & $18$ & & & & & & & \\
\hline
$3$ & $1$ & $9$ & $36$ & $90$ & $81$ & $54$ & $18$ & & & & \\
\hline
$4$ & $1$ & $9$ & $36$ & $81$ & $153$ & $162$ & $153$ & $108$ & $54$ & $18$ & \\
\hline
$5$ & $1$ & $9$ & $36$ & $81$ & $144$ & $234$ & $261$ & $270$ & $243$ & $180$ & $108$\\
\hline
$6$ & $1$ & $9$ & $36$ & $81$ & $144$ & $225$ & $333$ & $378$ & $405$ & $396$ & $351$ \\
\hline
$7$ & $1$ & $9$ & $36$ & $81$ & $144$ & $225$ & $324$ & $450$ & $513$ & $558$ & $567$ \\
\hline
$8$ & $1$ & $9$ & $36$ & $81$ & $144$ & $225$ & $324$ & $441$ & $585$ & $666$ & $729$ \\
\hline
$9$ & $1$ & $9$ & $36$ & $81$ & $144$ & $225$ & $324$ & $441$ & $576$ & $738$ & $837$ \\
\hline
$10$ & $1$ & $9$ & $36$ & $81$ & $144$ & $225$ & $324$ & $441$ & $576$ & $729$ & $909$ \\
\hline
\hline
$\lim_{t\rightarrow\infty}$ & $1$ & $9$ & $36$ & $81$ & $144$ & $225$ & $324$ & $441$ & $576$ & $729$ & $900$ \\
\hline
\end{tabular}
\end{center}
\end{table}
\end{example}

We conclude with an example where the cohomology is characteristic-dependent:

\begin{example}
Consider the simplicial complex corresponding to the triangulation of the real projective plane $\RR\PP^2$ in Figure~2.
\begin{figure}[ht]
\[
\caption{A triangulation of the real projective plane}
\psfrag{1}{\footnotesize$1$}
\psfrag{2}{\footnotesize$2$}
\psfrag{3}{\footnotesize$3$}
\psfrag{4}{\footnotesize$4$}
\psfrag{5}{\footnotesize$5$}
\psfrag{6}{\footnotesize$6$}
\includegraphics[width=3cm]{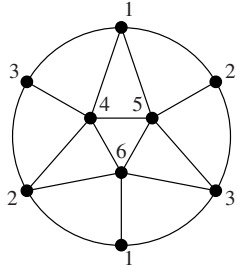}
\]
\end{figure}
The corresponding Stanley-Reisner ring $A$ is the homomorphic image of $k[x_1,\dots,x_6]$ modulo the ideal generated $x_1x_2x_3$, $x_1x_2x_4$, $x_1x_3x_5$, $x_1x_4x_6$, $x_1x_5x_6$, $x_2x_3x_6$, $x_2x_4x_5$, $x_2x_5x_6$, $x_3x_4x_5$, and $x_3x_4x_6$. Note that $\dim A=3$.

Suppose the field $k$ has characteristic other than $2$. Then $A$ is Cohen-Macaulay, see for example~\cite[page~180]{Hochster:simplicial}, so the natural map $H^3(\frakm;\,A)\to H^3_\frakm(A)$ is injective by Theorem~\ref{theorem:cohen:macaulay}. However, $A$ is not Gorenstein: the socle of the homomorphic image of $A$ modulo a system of parameters has rank $6$.

Next, suppose $k$ has characteristic $2$; then $A$ is not Cohen-Macaulay since
\[
{[H^2_\frakm(A)]}_\bzero\ \cong\ \tilde{H}^1(\Delta;\,k)\ \cong\ k.
\]
Indeed, in this case, $A$ has depth $2$, and the canonical module of $A$ may be computed as
\[
\omega_A\ =\ \Ext^3_{k[\bsx]}(A,\,\omega_{k[\bsx]})\ \cong\ A,
\]
so $A$ is quasi-Gorenstein. Hence the natural map $H^3(\frakm;\,A)\to H^3_\frakm(A)$ is nonzero, though not injective. The module $H^3(\frakm;\,A)$ has Hilbert series $1+6T^{-1}$, and Corollary~\ref{corollary:stanley:reisner} implies that the kernel of $H^3(\frakm^{[t]};\,A)\to H_\frakm^3(A)$ has Hilbert series $6T^{-t}$ for each~$t\ge1$. The ranks of the vector spaces
\[
{[H^3(\frakm^{[t]};\,A)]}_j
\]
are recorded in the next table, with the last row computed as in the preceding example, using that $A$ is quasi-Gorenstein, with $a$-invariant $0$.

\begin{table}[H]
\begin{center}
\begin{tabular}{|c||C{.6cm}|C{.6cm}|C{.6cm}|C{.6cm}|C{.6cm}|C{.6cm}|C{.6cm}|C{.6cm}|C{.6cm}|C{.6cm}|C{.6cm}|}
\hline
\backslashbox{$t$}{$j$} & $0$ & $-1$ & $-2$ & $-3$ & $-4$ & $-5$ & $-6$ & $-7$ & $-8$ & $-9$ & $-10$\\
\hline
\hline
$1$ & $1$ & $6$ & & & & & & & & & \\
\hline
$2$ & $1$ & $6$ & $21$ & $10$ & & & & & & & \\
\hline
$3$ & $1$ & $6$ & $21$ & $46$ & $45$ & $30$ & $10$ & & & & \\
\hline
$4$ & $1$ & $6$ & $21$ & $46$ & $81$ & $90$ & $85$ & $60$ & $30$ & $10$ & \\
\hline
$5$ & $1$ & $6$ & $21$ & $46$ & $81$ & $126$ & $145$ & $150$ & $135$ & $100$ & $60$\\
\hline
$6$ & $1$ & $6$ & $21$ & $46$ & $81$ & $126$ & $181$ & $210$ & $225$ & $220$ & $195$\\
\hline
$7$ & $1$ & $6$ & $21$ & $46$ & $81$ & $126$ & $181$ & $246$ & $285$ & $310$ & $315$ \\
\hline
$8$ & $1$ & $6$ & $21$ & $46$ & $81$ & $126$ & $181$ & $246$ & $321$ & $370$ & $405$ \\
\hline
$9$ & $1$ & $6$ & $21$ & $46$ & $81$ & $126$ & $181$ & $246$ & $321$ & $406$ & $465$ \\
\hline
$10$ & $1$ & $6$ & $21$ & $46$ & $81$ & $126$ & $181$ & $246$ & $321$ & $406$ & $501$ \\
\hline
\hline
$\lim_{t\rightarrow\infty}$ & $1$ & $6$ & $21$ & $46$ & $81$ & $126$ & $181$ & $246$ & $321$ & $406$ & $501$ \\
\hline
\end{tabular}
\end{center}
\end{table}
\end{example}

%%%%%%%%%%%%%%%%%%%%%%%%%%%%%%%%%%%%%%%%%%%%%%%%%%%%%%%%%%%%%%%%%%%%%%%%

\end{document}